%%%%%%%%%%%%%%%%%%%%%%%%%%%%%%%%%%%%%%%%%
% Stylish Article
% LaTeX Template
% Version 2.1 (1/10/15)
%
% This template has been downloaded from:
% http://www.LaTeXTemplates.com
%
% Original author:
% Mathias Legrand (legrand.mathias@gmail.com) 
% With extensive modifications by:
% Vel (vel@latextemplates.com)
%
% License:
% CC BY-NC-SA 3.0 (http://creativecommons.org/licenses/by-nc-sa/3.0/)
%
%%%%%%%%%%%%%%%%%%%%%%%%%%%%%%%%%%%%%%%%%

%----------------------------------------------------------------------------------------
%	PACKAGES AND OTHER DOCUMENT CONFIGURATIONS
%----------------------------------------------------------------------------------------

\documentclass[fleqn,11pt]{SelfArx} % Document font size and equations flushed left

\usepackage[english]{babel} % Specify a different language here - english by default
\usepackage{mathpazo}
\usepackage{lipsum} % Required to insert dummy text. To be removed otherwise

%----------------------------------------------------------------------------------------
\usepackage{amsmath,amssymb, amsthm, mathrsfs}
\usepackage{graphicx}	
\usepackage{subfigure}
\usepackage{float} 
\usepackage{amsfonts, dsfont}
\usepackage{authblk}
\usepackage{algorithm}
\usepackage[noend]{algpseudocode}

% END SECTION 1: DECLARE PACKAGES %
%---------------------------------------- ----------------------------------- %

%SECTION 2: SHORTCUTS FOR DIFFERENT MATH FONTS %

% Mathfrak %

% Mathcal %

%  Mathbb %
\newcommand{\N}{\mathbb{N}}

% END SECTION 2: SHORTCUTS FOR DIFFERENT MATH FONTS %
%---------------------------------------- ----------------------------------- %

%  SECTION 3: ADJUSTMENTS TO MATH SYMBOLS %

\newcommand{\frall}{\; \forall \;}

\renewcommand{\epsilon}{\varepsilon}
\newcommand{\mbf}{\mathbf}

% END SECTION 3: ADJUSTMENTS TO MATH SYMBOLS %
%---------------------------------------- ----------------------------------- %

%SECTION 4: NEWTHEOREM ENVIRONMENT %
% 

\newtheorem{theorem}{Theorem}
\newtheorem{lemma}{Lemma}

% END SECTION 4:  NEWTHEOREM ENVIRONMENT %
%---------------------------------------- ----------------------------------- %

% GRAPHICS PATH  %

\graphicspath{{/Users/traylr/Documents/Papers/generalizedMultinomial/Figures/}}

%	COLUMNS
%----------------------------------------------------------------------------------------

%\setlength{\columnsep}{0.55cm} % Distance between the two columns of text
\setlength{\fboxrule}{0.75pt} % Width of the border around the abstract

%----------------------------------------------------------------------------------------
%	COLORS
%----------------------------------------------------------------------------------------

\definecolor{color1}{HTML}{003366} % Color of the article title and sections
\definecolor{color2}{RGB}{170, 170,170} 
\colorlet{color3}{color1!70!}% Color of the boxes behind the abstract and headings

%----------------------------------------------------------------------------------------
%	HYPERLINKS
%----------------------------------------------------------------------------------------

\usepackage{hyperref} % Required for hyperlinks
\hypersetup{hidelinks,colorlinks,breaklinks=true,urlcolor=color2,citecolor=color1,linkcolor=color3,bookmarksopen=false,pdftitle={Title},pdfauthor={Author}}

%----------------------------------------------------------------------------------------
%	ARTICLE INFORMATION
%----------------------------------------------------------------------------------------

\JournalInfo{Dell EMC Technical Reports, Vol. 1, 2017} % Journal information
\Archive{Original Research article} % Additional notes (e.g. copyright, DOI, review/research article)

\PaperTitle{A Generalized Multinomial Distribution from Dependent Categorical Random Variables} % Article title

\Authors{Rachel Traylor\footnote{\textit{Office of the CTO, Dell EMC}} , Ph.D.} % Authors
 % Author affiliation
%\affiliation{\textsuperscript{2}\textit{Department of Chemistry, University of Examples, London, United Kingdom}} % Author affiliation
%\affiliation{*\textbf{Corresponding author}: john@smith.com} % Corresponding author

\Keywords{categorical variables --- correlation --- multinomial distribution --- probability theory} % Keywords - if you don't want any simply remove all the text between the curly brackets
 % Defines the keywords heading name

%----------------------------------------------------------------------------------------
%	ABSTRACT
%----------------------------------------------------------------------------------------

\Abstract{ 			Categorical random variables are a common staple in machine learning methods and other applications across disciplines. Many times, correlation within categorical predictors exists, and has been noted to have an effect on various algorithm effectiveness, such as feature ranking and random forests. We present a mathematical construction of a sequence of identically distributed but dependent categorical random variables, and give a generalized multinomial distribution to model the probability of counts of such variables. }%\lipsum[1]~}

%----------------------------------------------------------------------------------------

\begin{document}

\flushbottom % Makes all text pages the same height

\maketitle % Print the title and abstract box

\tableofcontents % Print the contents section

\thispagestyle{empty} % Removes page numbering from the first page

%----------------------------------------------------------------------------------------
%	ARTICLE CONTENTS
%----------------------------------------------------------------------------------------

\section*{Introduction} % The \section*{} command stops section numbering

\addcontentsline{toc}{section}{Introduction} % Adds this section to the table of contents

Bernoulli random variables are invaluable in statistical analysis of phenomena having binary outcomes, however, many other variables cannot be modeled by only two categories. Many topics in statistics and machine learning rely on categorical random variables, such as random forests and various clustering algorithms.~\cite{ nicodemus,grozavu}. Many datasets exhibit correlation or dependency among predictors as well as within predictors, which can impact the model used.~\cite{nicodemus, tolosi}. This can result in unreliable feature ranking~\cite{tolosi}, and inaccurate random forests~\cite{nicodemus}. 

Some attempts to remedy these effects involve Bayesian modeling~\cite{higgs} and various computational and simulation methods~\cite{tannenbaum}. In particular, simulation of correlated categorical variables has been discussed in the literature for some time.~\cite{biswas,  ibrahim,lee}. Little research has been done to create mathematical framework of correlated or dependent categorical variables and the resulting distributions of functions of such variables. 

Korzeniowski~\cite{Korzeniowski2013} studied dependent Bernoulli variables, formalizing the notion of identically distributed but dependent Bernoulli variables and deriving the distribution of the sum of such dependent variables, yielding a Generalized Binomial Distribution.

In this paper, we generalize the work of Korzeniowski~\cite{Korzeniowski2013} and formalize the notion of a sequence of identically distributed but dependent categorical random variables. We then derive a Generalized Multinomial Distribution for such variables and provide some properties of said distribution. We also give an algorithm to generate a sequence of correlated categorical random variables.
%------------------------------------------------

\section{Background}

\begin{figure}[t]
	\centering
	\includegraphics[scale = 0.5]{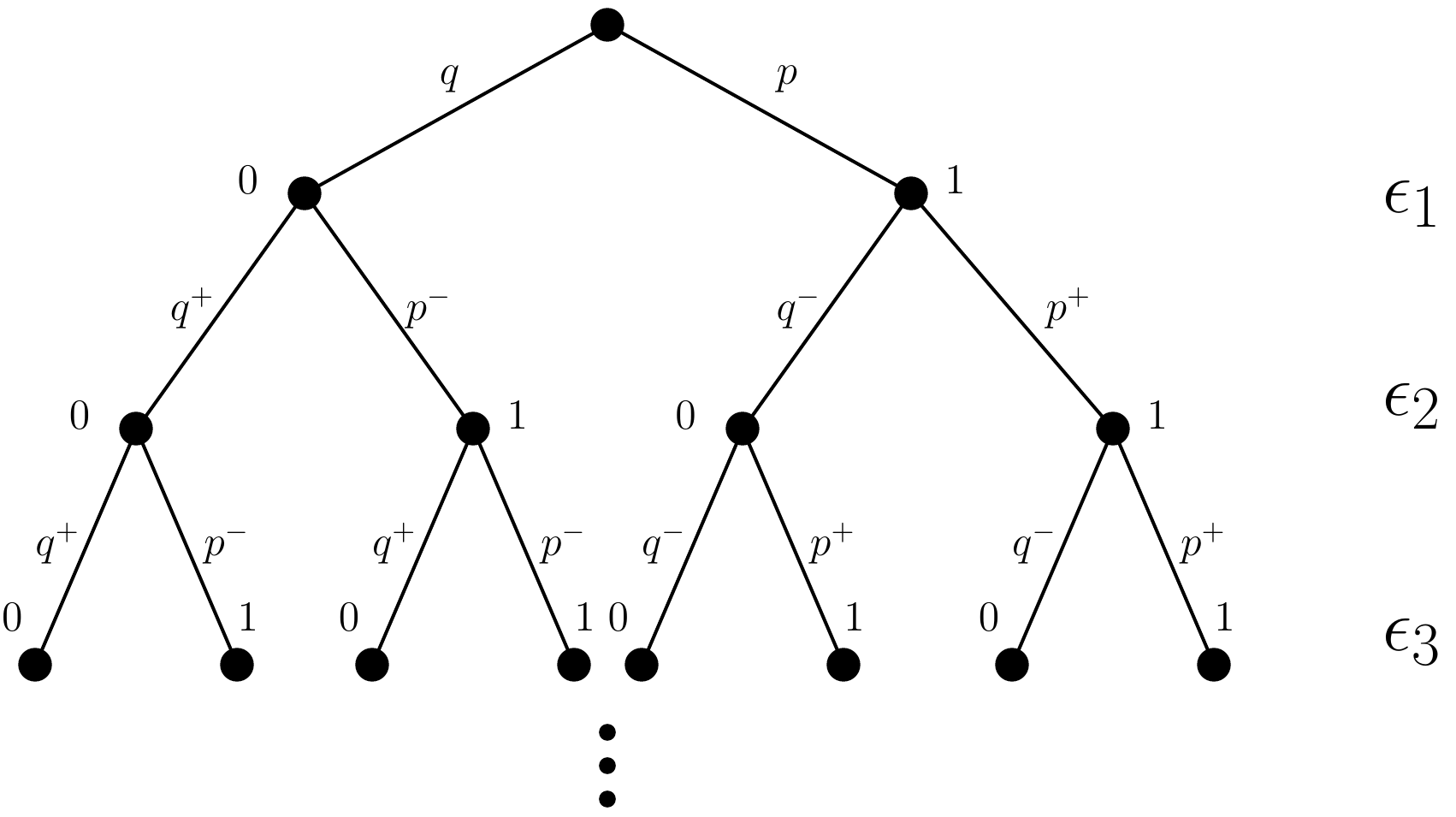}
	\caption{First-Dependence Tree Structure}
	\label{fig: Binary Tree Orig}
\end{figure}

Korzeniowski defined the notion of dependence in a way we will refer to here as \textit{dependence of the first kind} (FK dependence). Suppose $(\epsilon_{1},...,\epsilon_{N})$ is a sequence of Bernoulli random variables, and $P(\epsilon_{1} = 1) = p$. Then, for $\epsilon_{i}, i \geq 2$, we weight the probability of each binary outcome toward the outcome of $\epsilon_{1}$, adjusting the probabilities of the remaining outcomes accordingly. 

Formally, let $0 \leq \delta \leq 1$, and $q = 1-p$. Then define the following quantities
\begin{equation}
\begin{aligned}
p^{+} := P(\epsilon_{i} = 1 | \epsilon_{1} = 1) = p + \delta q &\qquad p^{-} := P(\epsilon_{i} = 0 | \epsilon_{1} = 1) = q - \delta q \\
q^{+} := P(\epsilon_{i} = 1 | \epsilon_{1} = 0) = p - \delta p  &\qquad q^{-} := P(\epsilon_{i} = 0 | \epsilon_{1} = 0) = q + \delta p
\end{aligned} 
\end{equation}

Given the outcome $i$ of $\epsilon_{1}$, the probability of outcome $i$ occurring in the subsequent Bernoulli variables $\epsilon_{2}, \epsilon_{3},...\epsilon_{n}$ is $p^{+}, i = 1$ or $q^{+}, i=0$. The probability of the opposite outcome is then decreased to $q^{-}$ and $p^{-}$, respectively. 

Figure~\ref{fig: Binary Tree Orig} illustrates the possible outcomes of a sequence of such dependent Bernoulli variables. Korzeniowski showed that, despite this conditional dependency, $P(\epsilon_{i} = 1) = p \frall i$. That is, the sequence of Bernoulli variables is identically distributed, with correlation shown to be 
\[Cor(\epsilon_{i}, \epsilon_{j}) = \begin{cases} \delta, & i=1 \\
\delta^{2}, &i \neq j, i,j \geq 2
\end{cases}\]

These identically distributed but correlated Bernoulli random variables yield a Generalized Binomial distribution with a similar form to the standard binomial distribution. In our generalization, we use the same form of FK dependence, but for categorical random variables. We will construct a sequence of identically distributed but dependent categorical variables from which we will build a generalized multinomial distribution. When the number of categories $K = 2$, the distribution reverts back to the generalized binomial distribution of Korzeniowski~\cite{Korzeniowski2013}. When the sequence is fully independent, the distribution reverts back to the independent categorical model and the standard multinomial distribution, and when the sequence is independent and $K=2$, we recover the standard binomial distribution. Thus, this new distribution represents a much larger generalization than prior models. 

\section{Construction of Dependent Categorical Variables}
\label{sec: const of dependent categorical vars}

\begin{figure}[H]
	\centering
	\includegraphics[scale=0.85]{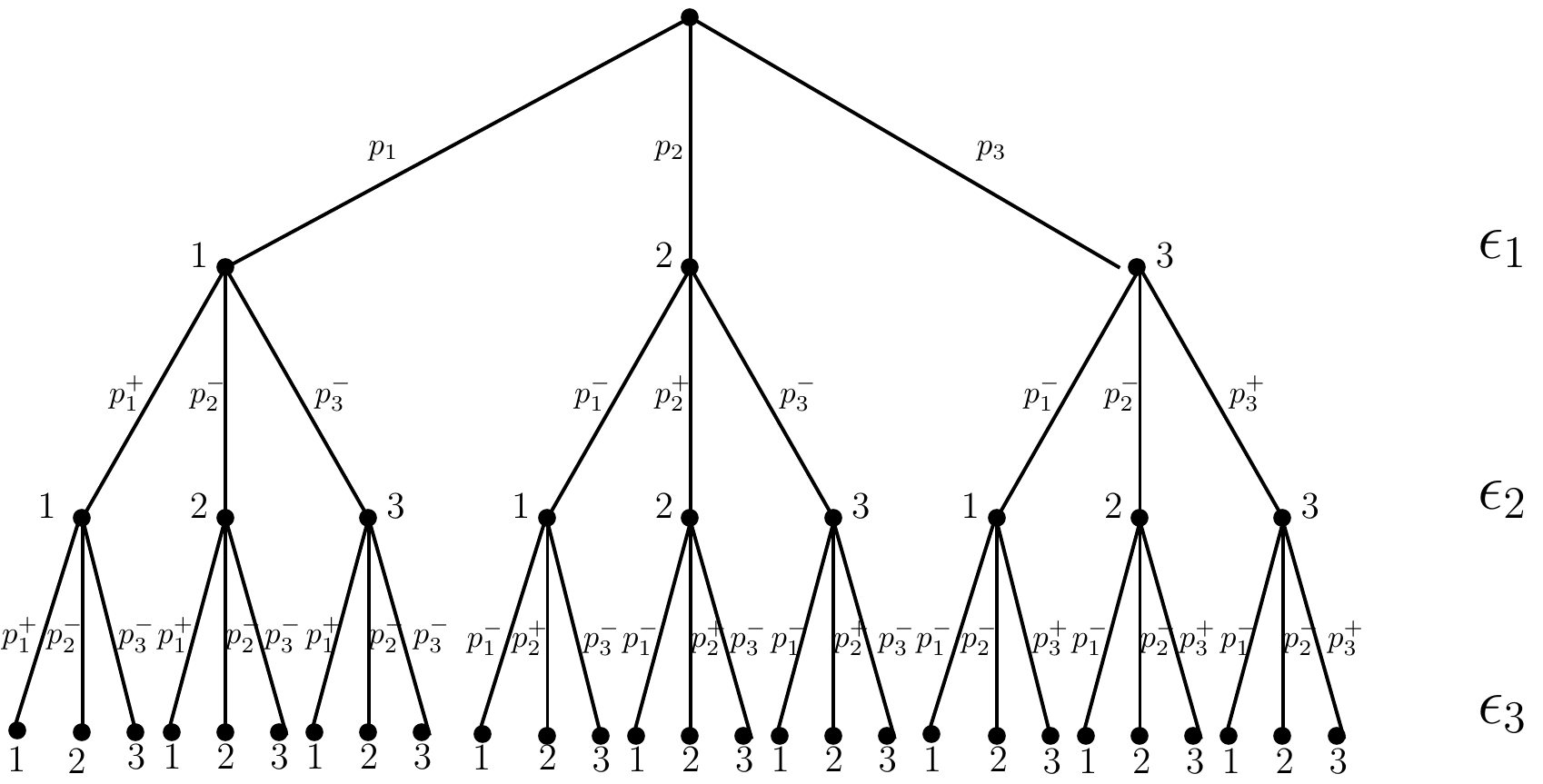}
	\caption{Probability distribution at $N=3$ for $K=3$}
	\label{fig: Probability distribution at N=3 for K=3}
\end{figure}
Suppose each categorical variable has $K$ possible categories, so the sample space $S = \{1,...,K\}$\footnote{These integers should not be taken as ordered or sequential, but rather as character titles of categories.} The construction of the correlated categorical variables is based on a probability mass distribution over $K-$adic partitions of $[0,1]$. We will follow graph terminology in our construction, as this lends a visual representation of the construction. We begin with a parent node and build a $K$-nary tree, where the end nodes are labeled by the repeating sequence $(1,...,K)$. Thus, after $N$ steps, the graph has $K^{N}$ nodes, with each node labeled $1,...,K$ repeating in the natural order and assigned injectively to the intervals 
$\left(0,\tfrac{1}{K^{N}}\right],\left(\tfrac{1}{K^{n}},\tfrac{2}{K^{N}}\right],...
,\left(\tfrac{K^{N}-1}{K^{n}},1\right]$. Define

\begin{equation}
\begin{aligned}
&\epsilon_{N} = i \hspace{1cm} \text{ on } \left(\tfrac{Kj}{K^{N}}, \tfrac{Kj+i}{K^{N}}\right] \\
&\epsilon_{N} = K \hspace{1cm} \text{on } \left(\tfrac{K(j+1)-1}{K^{N}}, \tfrac{K(j+1)}{K^{N}}\right]
\end{aligned}
\label{eq: epsilon def 1}
\end{equation}
where $i = 1,...,K-1$, and $j = 0,1,...,K^{N-1}-1$. An alternate expression for~\eqref{eq: epsilon def 1} is 
\begin{equation}
\begin{aligned}
\epsilon_{N} =i  &\text{ on } \left(\tfrac{l-1}{K^{N}}, \tfrac{l}{K^{N}}\right], & i \equiv l \mod K , & \quad i = 1,...,K-1  \\
\epsilon_{N} = K  &\text{ on } \left(\tfrac{l-1}{K^{N}}, \tfrac{l}{K^{N}}\right], & 0 \equiv l \mod K
\end{aligned}
\label{eq: epsilon def 2}
\end{equation}

To each of the branches of the tree, and by transitivity to each of the $K^{N}$ partitions of $[0,1]$, we assign a probability mass to each node such that the total mass is 1 at each level of the tree in a similar manner to~\cite{Korzeniowski2013}.

Let $0 < p_{i} < 1$, $i=1,...,K$ such that $\sum_{i=1}^{K}p_{i}=1$, and let $0 \leq \delta \leq 1$ be the \textit{dependency coefficient}. Define

$p_{i}^{+} := p_{i} + \delta\sum_{l \neq i}p_{l}$, 
$p_{i}^{-} = p_{i} - \delta p_{i}$ 

for $i=1,...,K$. These probabilities satisfy two important criteria:

\begin{lemma}
	\hspace{1in} 
	\begin{itemize}
		\item $ \sum_{i=1}^{K}p_{i} = p_{i}^{+} + \sum_{l \neq i}p_{l}^{-} = 1$
		\item $p_{i}p_{i}^{+} + p_{i}^{-}\sum_{l \neq i}p_{l} = p_{i}$ 
	\end{itemize}
	\label{lemma: criterion for pi+ pi-}
\end{lemma}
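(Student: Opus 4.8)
The plan is to verify both bullet points by direct algebraic substitution of the definitions $p_i^+ = p_i + \delta\sum_{l\neq i}p_l$ and $p_i^- = p_i - \delta p_i$, using repeatedly the normalization hypothesis $\sum_{i=1}^K p_i = 1$ in the equivalent form $\sum_{l\neq i}p_l = 1 - p_i$ for each fixed $i$. It is convenient to abbreviate $s_i := \sum_{l\neq i}p_l = 1 - p_i$, so that $p_i^+ = p_i + \delta s_i$; both identities then collapse to one-line cancellations of the $\delta$-terms.

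For the first bullet, I would first rewrite the sum of the decremented probabilities as $\sum_{l\neq i}p_l^- = \sum_{l\neq i}(p_l - \delta p_l) = (1-\delta)s_i$. Adding $p_i^+$ gives $p_i + \delta s_i + (1-\delta)s_i$; the terms in $\delta$ cancel, leaving $p_i + s_i = p_i + \sum_{l\neq i}p_l = \sum_{i=1}^K p_i$, which is $1$ by hypothesis. This simultaneously establishes the equality $\sum_{i=1}^K p_i = p_i^+ + \sum_{l\neq i}p_l^-$ and its equality to $1$.

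For the second bullet, I would expand $p_i p_i^+ = p_i^2 + \delta p_i s_i$ and $p_i^- s_i = (1-\delta)p_i s_i = p_i s_i - \delta p_i s_i$. Summing the two, the cross terms $\delta p_i s_i$ cancel, leaving $p_i^2 + p_i s_i = p_i(p_i + s_i) = p_i \cdot 1 = p_i$, as claimed.

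There is essentially no obstacle here; the only point requiring care is to keep the index $i$ fixed throughout and to use $\sum_{l\neq i}p_l = 1 - p_i$ consistently, rather than inadvertently re-summing over $i$. Phrasing the argument via the abbreviation $s_i$ makes both verifications transparent and avoids any bookkeeping confusion.
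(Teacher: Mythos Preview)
Your argument is correct and is essentially the same direct-substitution approach as the paper's, which dismisses the first identity as obvious and compresses the second into the single line $p_{i}p_{i}^{+} + p_{i}^{-}\sum_{l\neq i}p_{l} = p_{i}^{2} + p_{i}(1-p_{i}) = p_{i}$. Your introduction of $s_i = 1 - p_i$ merely makes the cancellation of the $\delta$-terms explicit rather than implicit.
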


\begin{proof}
	The first is obvious from the definitions of $p_{i}^{+/-}$ above. The second statement follows clearly from algebraic manipulation of the definitions: 
	$p_{i}p_{i}^{+} + p_{i}^{-}\sum_{l\neq i}p_{l}   
	= p_{i}^{2} + p_{i}(1-p_{i}) 
	= p_{i}$
\end{proof}

We now give the construction in steps down each level of the $K$-nary tree.\\ 
\vspace{.4cm}\\
\underline{LEVEL 1:} \\
\textit{Parent node} has mass 1, with mass split $1\cdot\prod_{i=1}^{K}p_{i}^{[\epsilon_{1} = i]}$,
 where $[\cdot]$ is an Iverson bracket. This level corresponds to a sequence $\epsilon$ of dependent categorical variables of length 1. 

\begin{table}[H]
	\centering
	\begin{tabular}{|cccc|}
		\hline
		$\epsilon_{1}$/Branch & Path & Mass at Node & Interval  \\
		\hline
		1 &\textit{parent} $\to$ 1 &  $p_{1}$& $(0,1/K]$ \\
		2 &\textit{parent} $\to$ 2 &  $p_{2}$& $(1/K,2/K]$ \\
		\vdots & \vdots & \vdots &  \vdots \\
		i &\textit{parent} $\to$ i &  $p_{i}$& $((i-1)/K,i/K]$ \\
		\vdots & \vdots & \vdots &  \vdots \\
		K &\textit{parent} $\to$ K &  Mass $p_{K}$& $((K-1)/K,1]$ \\
		\hline
	\end{tabular}
	\caption{Probability mass distribution at Level 1}
	\label{table: formal construction L1}
\end{table}

\noindent\underline{LEVEL 2:} \\
Level 2 has $K$ nodes, with $K$ branches stemming from each node. This corresponds to a sequence of length 2: $\epsilon = (\epsilon_{1}, \epsilon_{2})$. Denote $i.1$ as node $i$ from level 1. For $i=1,...,K$,

{\bf Node $\mbf{i.1}$} has mass $p_{i}$, with mass split $p_{i}\left(p_{i}^{+}\right)^{[\epsilon_{2}=i]}\prod\limits_{\substack{j=1,j\neq i}}^{K}\left(p_{j}^{-}\right)^{[\epsilon_{2} = j]}$
\begin{table}[H]
	\centering
	\begin{tabular}{|cccc|}
		\hline
		$\epsilon_{2}$/Branch & Path 		  &  Mass at Node & Interval  \\
		\hline
		1	   &$i.1 \to 1$ &  $p_{i}p_{1}^{-}$ & $\left(\frac{(i-1)K}{K^{2}},\frac{(i-1)K + 1}{K^{2}}\right]$ \\
		2 	   &$i.1 \to 2$ &  $p_{i}p_{2}^{-}$ & $\left(\frac{(i-1)K + 1}{K^{2}},\frac{(i-1)K+ 2}{K^{2}}\right]$ \\
		\vdots & \vdots       & \vdots	 		   & \vdots \\
		i  	   &$i.1 \to i$ &  $p_{i}p_{i}^{+}$& $\left(\frac{(i-1)K + (i-1)}{K^{2}},\frac{(i-1)K+ i}{K^{2}}\right]$ \\
		\vdots & \vdots 	  & \vdots 	 & \vdots \\
		K      &$i.1 \to K$ &  $p_{i}p_{K}^{-}$& $\left(\frac{iK-1}{K^{2}},\frac{iK}{K^{2}}\right]$ \\
		\hline
	\end{tabular}
	\caption{Probability mass distribution at Level II, Node i}
	\label{table: formal construction L2 Ni}
\end{table}

In this fashion, we distribute the total mass 1 across level 2. In general, at any level $r$, there are $K$ streams of probability flow at Level $r$. For $\epsilon_{1} = i$, the probability flow is given by 
\begin{equation}
p_{i}\prod_{j = 2}^{r}\left[\left(p_{i}^{+}\right)^{[\epsilon_{j} = i]}\prod_{l\neq i }\left(p_{l}^{-}\right)^{[\epsilon_{j} = l]}\right], i = 1,...,K
\label{eq: prob flow at level r}
\end{equation}
We use this flow to assign mass to the $K^{r}$ intervals of $[0,1]$ at level $r$ in the same way as above. 
We may also verify via algebraic manipulation that
\begin{equation}
p_{i} = p_{i}\left(p_{i}^{+} + \sum_{l\neq i}p_{i}^{-}\right)^{r-1} = p_{i}\sum_{\epsilon_{2},...,\epsilon_{r}}\prod_{j=2}^{r}\left[\left(p_{i}^{+}\right)^{[\epsilon_{j} = i]}\prod_{l \neq i}\left(p_{l}^{-}\right)^{[\epsilon_{j} = l]}\right]
\end{equation}
where the first equality is due to the first statement of Lemma~\ref{lemma: criterion for pi+ pi-}, and the second is due to~\eqref{eq: prob flow at level r}. 

\subsection{Example construction}
For an illustration, refer to Figure~\ref{fig: Probability distribution at N=3 for K=3}. In this example, we construct a sequence of length $N = 3$ of categorical variables with $K = 3$ categories. At each level $r$, there are $3^{r}$ nodes corresponding to $3^{r}$ partitions of the interval $[0,1]$. Note that each time the node splits into 3 children, the sum of the split probabilities is 1. Despite the outcome of the previous random variable, the next one always has three possibilities. The sample space of categorical variable sequences of length 3 has $3^{3} = 27$ possibilities. Some example sequences are $(1,3,2)$ with probability $p_{1}p_{3}^{-}p_{2}^{-}$, $(2,1,2)$ with probability $p_{2}p_{1}^{-}p_{2}^{+}$, and $(3,1,1)$ with probability $p_{3}p_{1}^{-}p_{1}^{-}$. These probabilities can be determined by tracing down the tree in Figure~\ref{fig: Probability distribution at N=3 for K=3}.  

\subsection{Properties }

\subsubsection{Identically Distributed but Dependent}
We now show the most important property of this class of sequences-- that they remain identically distributed despite losing independence. 

\begin{lemma}
	$P(\epsilon_{r} = i) = p_{i}; i = 1,...,K, r \in \N$.
	\label{lemma: inductive prob distribution}
\end{lemma}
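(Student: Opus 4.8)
The plan is to prove $P(\epsilon_r = i) = p_i$ by induction on $r$, which matches the name of the lemma (``inductive prob distribution''). The base case $r = 1$ is immediate from the Level 1 construction: by Table~\ref{table: formal construction L1}, the mass assigned to the node labeled $i$ (equivalently, the interval $((i-1)/K, i/K]$) is exactly $p_i$, and $\{\epsilon_1 = i\}$ is precisely that interval. So $P(\epsilon_1 = i) = p_i$.

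For the inductive step, I would condition on the value of $\epsilon_1$, since all the dependence in this construction is of the ``first kind'' — everything downstream is governed by whether $\epsilon_j$ agrees with $\epsilon_1$ or not. Write
\begin{equation}
P(\epsilon_r = i) = \sum_{m=1}^{K} P(\epsilon_1 = m)\, P(\epsilon_r = i \mid \epsilon_1 = m) = \sum_{m=1}^{K} p_m\, P(\epsilon_r = i \mid \epsilon_1 = m),
\end{equation}
using the base case for $P(\epsilon_1 = m) = p_m$. Now I need the conditional probability $P(\epsilon_r = i \mid \epsilon_1 = m)$ for $r \geq 2$. Using the probability flow formula~\eqref{eq: prob flow at level r}, conditioning on $\epsilon_1 = m$ makes the variables $\epsilon_2, \ldots, \epsilon_r$ behave like an i.i.d.-type product: each $\epsilon_j$ ($2 \le j \le r$) independently takes value $m$ with probability $p_m^{+}$ and value $l \ne m$ with probability $p_l^{-}$. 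Hence $P(\epsilon_r = i \mid \epsilon_1 = m) = p_m^{+}$ if $i = m$, and $= p_i^{-}$ if $i \ne m$ — here I would marginalize out $\epsilon_2, \ldots, \epsilon_{r-1}$, each of whose $K$ branch-masses sum to $1$ by the first part of Lemma~\ref{lemma: criterion for pi+ pi-}, so only the single factor coming from $\epsilon_r$ survives. (Alternatively one invokes the already-verified displayed identity $p_m = p_m \sum_{\epsilon_2,\ldots,\epsilon_r}\prod_{j=2}^r[\cdots]$ and strips off the last index.)

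Substituting back, the sum splits into the $m = i$ term and the $m \ne i$ terms:
\begin{equation}
P(\epsilon_r = i) = p_i\, p_i^{+} + \sum_{m \ne i} p_m\, p_i^{-} = p_i p_i^{+} + p_i^{-}\sum_{m \ne i} p_m = p_i,
\end{equation}
where the last equality is exactly the second statement of Lemma~\ref{lemma: criterion for pi+ pi-} (with the roles of the summation index relabeled). This closes the induction.

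The main obstacle is making the conditional-independence claim precise: one must justify that, given $\epsilon_1 = m$, the downstream variables $\epsilon_2,\dots,\epsilon_r$ factorize according to~\eqref{eq: prob flow at level r} and that summing over all intermediate outcomes $\epsilon_2,\dots,\epsilon_{r-1}$ collapses (because each node's $K$ children masses total the parent mass, i.e. the first bullet of Lemma~\ref{lemma: criterion for pi+ pi-}). Everything after that is the one-line algebraic identity already recorded as the second bullet of Lemma~\ref{lemma: criterion for pi+ pi-}. It is worth noting the induction here is somewhat degenerate — the key computation is really just ``condition on $\epsilon_1$'' plus Lemma~\ref{lemma: criterion for pi+ pi-} — so I would double-check whether a cleaner non-inductive phrasing (directly summing the flow~\eqref{eq: prob flow at level r} over the appropriate intervals) is preferable, though the inductive framing is harmless and readable.
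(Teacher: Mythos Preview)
Your proof is correct and takes a cleaner route than the paper's. The paper argues by tracking the masses $m_r^l$ at the individual nodes of level $r$: it sets up the inductive hypothesis $p_i = \sum_{l \equiv i \bmod K} m_{r-1}^l$, writes out how each parent mass $m_{r-1}^l$ splits into its $K$ children (the split depending on which $\epsilon_1$-branch the node lies in), and then re-assembles the child masses to recover $p_i$ at level $r$. Your argument bypasses all of this node-by-node bookkeeping: conditioning on $\epsilon_1 = m$ and invoking the product form~\eqref{eq: prob flow at level r}, you observe that $\epsilon_2,\dots,\epsilon_r$ are conditionally i.i.d., so the intermediate variables marginalize out trivially (each contributing a factor of $1$ by the first bullet of Lemma~\ref{lemma: criterion for pi+ pi-}), and the claim reduces to the second bullet. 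Your approach buys brevity and exposes the underlying structure — indeed, as you correctly note, the induction on $r$ becomes essentially vacuous once the conditional-i.i.d.\ observation is made, and every $r\ge 2$ reduces to the same two-term computation. The paper's approach, while heavier, has the small advantage that it is phrased directly in terms of the interval masses that define the construction, so no separate justification of the conditional factorization is required; your version is legitimate precisely because~\eqref{eq: prob flow at level r} already records that factorization.
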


\begin{proof}
	The proof proceeds via induction. $r=1$ is clear by definition, so we illustrate an additional base case. For $r=2$, from Lemma~\ref{lemma: criterion for pi+ pi-}, and for $i=1,...,K$,
	
	\[
	P(\epsilon_{2} = i) = P\left[\bigcup_{j=1}^{K^{2}-1}\left(\frac{Kj}{K^{2}}, \frac{Kj+1}{K^{2}}\right]\right]
	= p_{i}p_{i}^{+} + p_{i}^{-}\sum_{j\neq i}p_{j} 
	= p_{i}
	\]
	
	Then at level $r$, in keeping with the alternative expression~\eqref{eq: epsilon def 2}, we express $\epsilon_{r}$ in terms of the specific nodes at level $r$:
	
	\begin{equation}
	\epsilon_{r} = \sum_{l=1}^{K^{r}}\epsilon_{r}^{l}\mathds{1}_{\left(\tfrac{l-1}{K^{r}}, \tfrac{l}{K^{r}}\right]}, \text{ where }\epsilon_{r}^{l} = \begin{cases}
	l \mod K, & 0 \not\equiv l \mod K  \\
	K, & 0 \equiv l\mod K
	\end{cases}
	\end{equation}
	
    Let $m_{r}^{l}$ be the probability mass for $\epsilon_{r}^{l}$. Then, for $l = 1,...,K^{r}$ and $i = 1,...,K-1$,
	\begin{align}
	m_{r}^{l} &= \begin{cases}
	P(\epsilon_{r}^{l} = i), &i \equiv l \mod K \\
	P(\epsilon_{r} = K), & 0 \equiv l \mod K
	\end{cases}
	\end{align}
	
	As the inductive hypothesis, assume that for $i=1,...,K-1$,
	\[
	p_{i} = P(\epsilon_{r-1} = i) 	
	= P\left(\bigcup_{\substack{l=1\\ i \equiv l \mod K }}^{K^{r-1}}\{\epsilon_{r-1}^{l} = i\}\right) 
	= \sum_{\substack{l=1\\ i \equiv l \mod K}}^{K^{r-1}}P\left(\epsilon_{r-1}^{l} = i\right) 
	= \sum_{\substack{l=1\\ i \equiv l \mod K }}^{K^{r-1}}m_{r-1}^{l}
\]
	
	Each mass $m_{r-1}^{l}$ is split into $K$ pieces in the following way
	\begin{align}
	m_{r-1}^{l} &= \begin{cases}
	m_{r-1}^{l}\left(p_{1}^{+} + \sum_{j=2}^{K}p_{i}^{-}\right), &l = 1,...,K \\
	m_{r-1}^{l}\left(p_{2}^{+} + \sum_{j\neq 2}p_{i}^{-}\right), &l = K+1,...,2K \\
	m_{r-1}^{l}\left(p_{3}^{+} + \sum_{j\neq 3}p_{i}^{-}\right), &l = 2K+1,...,3K \\
	&\vdots\\
	m_{r-1}^{l}\left(p_{K}^{+} + \sum_{j\neq K}p_{i}^{-}\right), &l = K^{r-1}-K+1,...,K^{r-1} 
	\end{cases}
	\end{align}
	which may be written as 
	\begin{align}
	m_{r-1}^{l} &= \begin{cases}
	P(\epsilon_{r} = 1) + P(\epsilon_{r}^{l} \neq 1) = m_{r}^{l} + P(\epsilon_{r}^{l} \neq 1), &l = 1,...,K \\
	P(\epsilon_{r} = 1) + P(\epsilon_{r}^{l} \neq 1) = m_{r}^{l} + P(\epsilon_{r}^{l} \neq 1), &l = K+1,...,2K \\
	P(\epsilon_{r} = 1) + P(\epsilon_{r}^{l} \neq 1) = m_{r}^{l} + P(\epsilon_{r}^{l} \neq 1), &l = 2K+1,...,3K \\
	&\hspace{-1.5in}\vdots\\
	P(\epsilon_{r} = 1) + P(\epsilon_{r}^{l} \neq 1) = m_{r}^{l} + P(\epsilon_{r}^{l} \neq 1), &l = K^{r-1}-K+1,...,K^{r-1} 
	\end{cases}
	\end{align}
	
	Then 
	\begin{equation}
	P(\epsilon_{r} = 1) = \sum_{\substack{\xi = 1\\ 1 \equiv \xi \mod K }}^{K^{r}}m_{r}^{\xi} = \sum_{l=1}^{K}m_{r-1}^{l}p_{1}^{+} + \sum_{l=K+1}^{K^{r-1}}m_{r-1}^{l}p_{1}^{-}
	\end{equation}
	
	When $1 \equiv l \mod K$, 
	\begin{equation}
	p_{1}^{+}\left(\sum_{\xi = 0}^{K-1}m_{r-1}^{l+\xi}\right) = m_{r-1}^{l}p_{1}^{+} + m_{r-1}^{l}\left(\sum_{j=2}^{K}p_{j}^{-}\right) 
	= m_{r-1}^{l}
	\label{eq: mass split}
	\end{equation}
	Equation~\ref{eq: mass split} holds because $m_{r-1}^{l+\xi} = \frac{p_{l+\xi}^{-}}{p_{1}^{+}}$, $\xi = 0,...,K-1$, and by of Lemma~\ref{lemma: criterion for pi+ pi-}. Thus,
	
	\begin{equation}
	\begin{aligned}
	P(\epsilon_{r} = 1) = \sum_{\substack{\xi = 1\\ 1 \equiv \xi \mod K }}^{K^{r}}m_{r}^{\xi}
	= \sum_{l=1}^{K}m_{r-1}^{l}p_{1}^{+} + \sum_{l=K+1}^{K^{r-1}}m_{r-1}^{l}p_{1}^{-} 
	= \sum_{\substack{l=1\\ 1 \equiv l \mod K}}^{K}m_{r-1}^{l} + \sum_{l=K+1}^{K^{r-1}}m_{r-1}^{l} 
	= \sum_{l=1}^{K^{r-1}}m_{r-1}^{l} 
	= p_{1}
	\end{aligned}
	\end{equation}
	
	A similar procedure for $i=2,...,K$ follows and the proof is complete.
\end{proof}

\subsubsection{Pairwise Cross-Covariance Matrix }
We now give the pairwise cross-covariance matrix for dependent categorical random variables. 
\begin{theorem}[Cross-Covariance of Dependent Categorical Random Variables]
	Denote $\Lambda^{\iota,\tau}$ be the $K \times K$ cross-covariance matrix of $\epsilon_{\iota}$ and $\epsilon_{\tau}$, $\iota,\tau = 1,...,n$, defined as $\Lambda^{\iota,\tau} = E[(\epsilon_{\iota} - E[\epsilon_{\iota}])(\epsilon_{\tau} - E[\epsilon_{\tau}])]$. Then the entries of the matrix are given by
	$\Lambda^{1,\tau}_{ij} = \begin{cases}
	\delta p_{i}(1-p_{i}), & i = j \\
	-\delta p_{i}p_{j}, & i \neq j
	\end{cases}$, $\tau \geq 2$, and $\Lambda^{\iota,\tau}_{ij} = \begin{cases}
	\delta^{2}p_{i}(1-p_{i}), & i = j \\
	-\delta^{2}p_{i}p_{j}, & i \neq j
	\end{cases}$, $\tau > \iota,$ $\iota \neq 1$.
	
\end{theorem}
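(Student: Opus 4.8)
The plan is to treat each categorical variable $\epsilon_\iota$ via its one-hot encoding, so that the $(i,j)$ entry of $\Lambda^{\iota,\tau}$ is the scalar covariance $\mathrm{Cov}(\mathds{1}[\epsilon_\iota = i],\,\mathds{1}[\epsilon_\tau = j]) = P(\epsilon_\iota = i,\,\epsilon_\tau = j) - P(\epsilon_\iota = i)P(\epsilon_\tau = j)$. By Lemma~\ref{lemma: inductive prob distribution} the marginals are exactly $p_i$ and $p_j$, so the whole problem reduces to computing the joint probabilities $P(\epsilon_\iota = i,\,\epsilon_\tau = j)$ for $\iota < \tau$ and subtracting $p_i p_j$. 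The key structural observation is that the probability flow~\eqref{eq: prob flow at level r} factors as a product over the coordinates $j = 2,\dots,r$; hence, conditioned on $\epsilon_1 = k$, the variables $\epsilon_2,\epsilon_3,\dots$ are mutually independent, each with the tilted law $P(\epsilon_m = k \mid \epsilon_1 = k) = p_k^+$ and $P(\epsilon_m = l \mid \epsilon_1 = k) = p_l^-$ for $l \neq k$. (This conditional marginal follows by summing the flow over the intermediate coordinates and telescoping with the first identity of Lemma~\ref{lemma: criterion for pi+ pi-}.)

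First I would dispatch the case $\iota = 1$. Here $P(\epsilon_1 = i,\,\epsilon_\tau = j) = p_i\,P(\epsilon_\tau = j \mid \epsilon_1 = i)$, which equals $p_i p_i^+$ when $i = j$ and $p_i p_j^-$ when $i \neq j$. Substituting $p_i^+ = p_i + \delta(1-p_i)$ and $p_j^- = (1-\delta)p_j$ and subtracting $p_i p_j$ gives $\delta p_i(1-p_i)$ on the diagonal and $-\delta p_i p_j$ off it, matching the claim for $\tau \geq 2$.

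Next, the case $1 < \iota < \tau$. Conditioning on $\epsilon_1$ and invoking the conditional independence above,
\[
P(\epsilon_\iota = i,\,\epsilon_\tau = j) = \sum_{k=1}^{K} p_k\, a_i^{(k)} a_j^{(k)}, \qquad a_i^{(k)} = \begin{cases} p_i^+, & i = k \\ p_i^-, & i \neq k. \end{cases}
\]
For $i = j$ this is $p_i (p_i^+)^2 + (1-p_i)(p_i^-)^2$; for $i \neq j$ it splits according to $k = i$, $k = j$, and $k \notin \{i,j\}$ into $p_i p_i^+ p_j^- + p_j p_i^- p_j^+ + (1 - p_i - p_j) p_i^- p_j^-$. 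Using the relations $p_i^- = (1-\delta)p_i$ and $p_i^+ = p_i^- + \delta$, both expressions collapse: the diagonal one to $p_i^2 + \delta^2 p_i(1-p_i)$ and the off-diagonal one to $(1-\delta^2)p_i p_j$. Subtracting $p_i p_j$ yields $\delta^2 p_i(1-p_i)$ and $-\delta^2 p_i p_j$, as claimed.

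The only genuine work is the bookkeeping in the last case: keeping the three sub-cases of the sum over $k$ straight and verifying that the $\delta$-linear contributions cancel so that only a $\delta^2$ term survives. This cancellation is precisely the categorical analogue of how Korzeniowski's $\mathrm{Cor}(\epsilon_i,\epsilon_j) = \delta^2$ arises for $i,j \geq 2$, and everything else is a direct consequence of Lemmas~\ref{lemma: criterion for pi+ pi-} and~\ref{lemma: inductive prob distribution} together with the product structure of the flow.
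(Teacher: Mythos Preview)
Your argument is correct, and in fact cleaner than the one in the paper. Both proofs begin the same way, rewriting $\Lambda^{\iota,\tau}_{ij}$ as $P(\epsilon_\iota=i,\epsilon_\tau=j)-p_ip_j$ via Lemma~\ref{lemma: inductive prob distribution}, and both handle $\iota=1$ by the direct computation $P(\epsilon_1=i,\epsilon_\tau=j)=p_ip_j^{\pm}$. The divergence is in how the joint law is obtained. The paper never isolates the conditional-independence statement; instead it establishes $P(\epsilon_1=i,\epsilon_\tau=j)$ by induction on $\tau$, tracking the mass splits $m_{\tau-1}^{l}\mapsto m_\tau^{\xi}$ node by node down the $K$-nary tree, and then repeats a similar (sketched) induction for $\iota=2$ before appealing to symmetry for general $\iota$. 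You instead extract from~\eqref{eq: prob flow at level r} the single structural fact that, conditional on $\epsilon_1=k$, the coordinates $\epsilon_2,\dots,\epsilon_n$ are i.i.d.\ with law $(p_1^-,\dots,p_{k-1}^-,p_k^+,p_{k+1}^-,\dots,p_K^-)$; from this the joint $P(\epsilon_\iota=i,\epsilon_\tau=j)=\sum_k p_k\,a_i^{(k)}a_j^{(k)}$ is immediate for every $2\le\iota<\tau$ at once, and your algebraic reduction (via $p_i^-=(1-\delta)p_i$, $p_i^+=p_i^-+\delta$) to $p_i^2+\delta^2 p_i(1-p_i)$ and $(1-\delta^2)p_ip_j$ is correct. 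What you gain is a uniform one-line treatment of all $\iota\ge 2$ without a separate induction; what the paper's approach retains is a closer tie to the tree picture, which may be pedagogically useful but is logically redundant once the product form of the flow is in hand.
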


\begin{proof}
	The $ij$th entry of $\Lambda^{\iota,\tau}$ is given by 
	\begin{equation}
	\text{Cov}([\epsilon_{\iota} = i], [\epsilon_{\tau} = j]) = \mathds{E}[[\epsilon_{\iota} = i][\epsilon_{\tau} = j]] - \mathds{E}[[\epsilon_{\iota} = i]]\mathds{E}[[\epsilon_{\tau} = j]] 
	= P(\epsilon_{\iota} = i, \epsilon_{\tau} = j) - P(\epsilon_{\iota} = i)P(\epsilon_{\tau} = j) 
	\end{equation}
	
	Let $\iota = 1$. For $j=i$, and $\tau = 2$, 
	\begin{equation}
	\text{Cov}([\epsilon_{1} = i], [\epsilon_{2} = i]) 
	= P(\epsilon_{1} = i, \epsilon_{2} = i) - p_{i}^{2} 
	= p_{i}p_{i}^{+} - p_{i}^{2} 
	= \delta p_{i}(1-p_{i})
	\end{equation}
	For $j \neq i$ and $\tau = 2$,
	\begin{equation}
	\text{Cov}([\epsilon_{1} = i], [\epsilon_{2} = j]) 
	= P(\epsilon_{1} = i, \epsilon_{2} = j) - p_{i}p_{j} 
	= p_{i}p_{j}^{-} - p_{i}p_{j} 
	= -\delta p_{i}p_{j}
	\end{equation}
	For $\tau \neq 2$, it suffices to show that $P(\epsilon_{1} = i, \epsilon_{\tau} = j) = \begin{cases}
	p_{i}p_{i}^{+}, & j = i \\
	p_{i}p_{j}^{-}, & j \neq i
	\end{cases}$. 
	
	Starting from $\epsilon_{1}$, the tree is split into $K$ large branches governed by the results of $\epsilon_{1}$. Then at level $r$, there are $K^{r}$ nodes indexed by $l$. Each of the K large branches contains $K^{r-1}$ of these nodes. That is,
	\begin{align*}
	\epsilon_{1} = 1 \text{ branch contains nodes } & l = 1,...,K^{r-1} \\
	\epsilon_{1} = 2 \text{ branch contains nodes } & l = K^{r-1}+1,...,2K^{r-1} \\
	&\vdots \\
	\epsilon_{1} = K \text{ branch contains nodes } & l = K^{r}-K+1,...,K^{r} 
	\end{align*}
	Then $P(\epsilon_{1} = 1, \epsilon_{r}^{l} = i) = m_{r}^{l}; \quad i \equiv l \mod K, l = 1,...,K^{r-1}$. We have already shown the base case for $r=2$. So, as an inductive hypothesis, assume 
	\[P(\epsilon_{1} = 1, \epsilon_{r-1} = i) = \begin{cases}
	p_{1}p_{1}^{+}, & i = 1 \\
	p_{1}p_{i}^{-}, & i \neq 1
	\end{cases}\]
	Then we have that for $i=1$,
	\[p_{1}p_{1}^{+} = P(\epsilon_{1} = 1, \epsilon_{r-1}=1) = \sum_{\substack{l=1\\1 \equiv l \mod K}}^{K^{r-2}}m_{r-1}^{l}\]
	and for $i\neq 1$,
	\[p_{1}p_{i}^{-} = P(\epsilon_{1} = 1, \epsilon_{r-1}=i) = \sum_{\substack{l=1\\i \equiv l \mod K}}^{K^{r-2}}m_{r-1}^{l}\]
	Moving one step down the tree (still noting that $\epsilon_{1} = 1$), we have seen that the each mass $m_{r-1}^{l}$ splits as 
	\[m_{r-1}^{l} = p_{1}^{+}m_{r-1}^{l} + m_{r-1}^{l}\sum_{j=2}^{K}p_{j}^{-} = P(\epsilon_{1}=1, \epsilon_{r}=1) + m_{r-1}^{l}\sum_{j=2}^{K}p_{j}^{-}\]
	Therefore, 
	\begin{align*}
	P(\epsilon_{1}=1, \epsilon_{r}=1) &= \sum_{\substack{l=1\\ 1 \equiv l \mod K}}^{K^{r-1}}m_{r}^{l} \\
	&= \sum_{\substack{l=1\\ 1 \equiv l \mod K}}^{K^{r-2}}m_{r-1}^{l}p_{1}^{+} + \sum_{j=2}^{K}\sum_{\substack{l=1\\1\not\equiv l \mod K}}^{K^{r-2}}m_{r-1}^{l}p_{j}^{-} \\
	&= p_{1}p_{1}^{+}p_{1}^{+} + p_{1}p_{1}^{+}\sum_{j=2}^{K}p_{j}^{-} \\
	&= p_{1}p_{1}^{+}
	\end{align*}
	A similar strategy shows that $P(\epsilon_{1}=1, \epsilon_{r}=i) = p_{1}p_{i}^{-}$ and that $P(\epsilon_{1}=i, \epsilon_{r}=j) = \begin{cases}
	p_{i}p_{i}^{+}, & j =i \\
	p_{i}p_{j}^{-}, & j \neq i
	\end{cases}$.

	Next, we will compute the $ij$th entry of $\Lambda^{\iota,\tau}$, $\iota > 1, \tau > \iota$. For $\iota = 2, \tau = 3$,  
	
	\begin{align}
	P(\epsilon_{2} = i, \epsilon_{3} = j) = \begin{cases}
	p_{i}\left(p_{i}^{+}\right)^{2} + (p_{i}^{-})^{2}\sum_{j\neq i}p_{j}, & i = j \\
	p_{i}p_{i}^{+}p_{j}^{-} + p_{j}p_{i}^{-}p_{j}^{+} + \sum_{\substack{l \neq i\\l \neq j}}p_{l}p_{i}^{-}p_{j}^{-}, & i \neq j
	\end{cases}
	\label{eq: epsilon2, epsilon3}
	\end{align}
	
	This can be seen by tracing the tree construction from level 2 to level 3, with an example given in Figure~\ref{fig: Probability distribution at N=3 for K=3}.
	Next, we will show that~\eqref{eq: epsilon2, epsilon3} holds for $\tau > 3$.
	
	First, note that 
	$P(\epsilon_{2} = 1, \epsilon_{\tau}^{l} = 1) = m_{\tau}^{l}$
	for $1 \equiv l \mod K$, and $l = (\xi - 1)L^{\tau-1}+j$, where $\xi = 1,...,K$, $j = 1,...,K^{\tau-2}$. We have shown the base case where $\tau = 3$. For the inductive hypothesis, assume that 
	\[P(\epsilon_{2}=1, \epsilon_{\tau-1}=1) = p_{1}\left(p_{1}^{+}\right)^{2} + (p_{1}^{-})^{2}\sum_{j=2}^{K}p_{j}^{-}\]
	Then we have that 
	$\left(p_{1}^{-}\right)^{2}\sum_{j=2}^{K}p_{j}^{-} = \sum_{1 \equiv l \mod K}m_{\tau-1}^{l}$
	for the $l$ defined above. Then, at each $m_{\tau-1}^{l}$, we have the following mass splits:
	
	\begin{equation}
	\begin{aligned}
	m_{\tau-1}^{l} &= m_{\tau-1}^{l}p_{1}^{+} + m_{\tau-1}^{l}\sum_{j=2}^{K}p_{j}^{-}, &\xi = 1 \\
	m_{\tau-1}^{l} &= m_{\tau-1}^{l}p_{i}^{+}  + m_{\tau-1}^{l}\sum_{j\neq i}m_{\tau-1}^{l}p_{j}^{-}, & \xi = i, &\quad i = 2,...,K
	\end{aligned}
	\label{eq: mass split 2}
	\end{equation}
	
	Then
	
	\begin{align}
	P(\epsilon_{2} = 1, \epsilon_{\tau} = 1) &= \sum_{\xi =1}^{K}\sum_{\substack{l,\xi \\ 1 \equiv l \mod K}}m_{\tau}^{l} 
	\end{align} 
	
	Using the same tactic as~\eqref{eq: mass split}, we see that using~\eqref{eq: mass split 2}, adding the components, and combining the terms correctly, the proof is complete for any $\tau$. 
	The proof for $P(\epsilon_{2} = i, \epsilon_{\tau} = j)$ for any $i,j$ follows similarly, and the proof for $P(\epsilon_{\iota} = i, \epsilon_{\tau} = j)$ follows from reducing to the above proven claims.

\end{proof}

In the next section, we exploit the desirable identical distribution of the categorical sequence in order to provide a generalized multinomial distribution for the counts in each category. 

\section{Generalized Multinomial Distribution}
\label{sec: gen multinomial}
From the construction in Section~\ref{sec: const of dependent categorical vars}, we derive a generalized multinomial distribution in which all categorical variables are identically distributed but no longer independent. 

\begin{theorem}[Generalized Multinomial Distribution]
	Let $\epsilon_{1},...,\epsilon_{N}$ be categorical random variables with categories ${1,...,K}$, constructed as in Section~\ref{sec: const of dependent categorical vars}. Let $X_{i} = \sum_{j=1}^{N}[\epsilon_{j} = i], i = 1,...,K$, where $[\cdot]$ is the Iverson bracket. Denote $\mbf{X} = (X_{1},...,X_{K})$, with observed values $\mbf{x} = (x_{1},...,x_{K})$. Then
	\[P(\mbf{X} = \mbf{x}) = \sum_{i=1}^{K}p_{i}\frac{(N-1)!}{(x_{i}-1)!\prod_{j \neq i}x_{j}!}\left(p_{i}^{+}\right)^{x_{i}-1}\prod_{j \neq i}\left(p_{j}^{-}\right)^{x_{j}}\]
\end{theorem}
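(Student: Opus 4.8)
The plan is to condition on the outcome of $\epsilon_1$ and recognize the resulting conditional law as an ordinary multinomial with $N-1$ trials. By the law of total probability together with Lemma~\ref{lemma: inductive prob distribution} (or just the Level 1 construction, which gives $P(\epsilon_1 = i) = p_i$),
\[P(\mbf{X} = \mbf{x}) = \sum_{i=1}^{K} P(\epsilon_1 = i)\,P(\mbf{X} = \mbf{x} \mid \epsilon_1 = i) = \sum_{i=1}^{K} p_i\,P(\mbf{X} = \mbf{x} \mid \epsilon_1 = i),\]
so it suffices to evaluate each conditional probability $P(\mbf{X} = \mbf{x} \mid \epsilon_1 = i)$.

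The key structural fact, read directly off the probability flow~\eqref{eq: prob flow at level r}, is that the joint mass of any sequence $(\epsilon_1,\epsilon_2,\ldots,\epsilon_N)$ with $\epsilon_1 = i$ factors as $p_i \prod_{j=2}^{N}\bigl[(p_i^{+})^{[\epsilon_j = i]}\prod_{l \neq i}(p_l^{-})^{[\epsilon_j = l]}\bigr]$. Dividing by $p_i$ shows that, conditionally on $\epsilon_1 = i$, the variables $\epsilon_2,\ldots,\epsilon_N$ are i.i.d.\ categorical on $\{1,\ldots,K\}$, where category $i$ has probability $p_i^{+}$ and each category $l \neq i$ has probability $p_l^{-}$; that these are genuine probabilities summing to $1$ is the first statement of Lemma~\ref{lemma: criterion for pi+ pi-}.

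From here the computation is a standard multinomial count. Given $\epsilon_1 = i$, the trial $\epsilon_1$ already contributes $1$ to the $i$th count, so $\{\mbf{X} = \mbf{x}\}$ forces exactly $x_i - 1$ of the remaining $N-1$ variables to equal $i$ and exactly $x_j$ of them to equal each $j \neq i$ (in particular the conditional probability is $0$ unless $x_i \geq 1$, matching the convention $1/(x_i-1)! = 0$ when $x_i = 0$). Since $(x_i - 1) + \sum_{j\neq i} x_j = N - 1$, there are $\tfrac{(N-1)!}{(x_i-1)!\prod_{j\neq i}x_j!}$ such sequences $(\epsilon_2,\ldots,\epsilon_N)$, each of conditional probability $(p_i^{+})^{x_i-1}\prod_{j\neq i}(p_j^{-})^{x_j}$. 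Substituting $P(\mbf{X}=\mbf{x}\mid\epsilon_1=i) = \tfrac{(N-1)!}{(x_i-1)!\prod_{j\neq i}x_j!}(p_i^{+})^{x_i-1}\prod_{j\neq i}(p_j^{-})^{x_j}$ into the total-probability expansion yields the claimed formula.

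The only genuine subtlety is the conditional-independence step: one must argue that given $\epsilon_1 = i$ the later variables are jointly i.i.d.\ with the stated marginals, not merely that each has that one-dimensional conditional. This is precisely what the product form of~\eqref{eq: prob flow at level r} encodes, so once that factorization is in hand the rest is bookkeeping; the $x_i = 0$ edge case and the verification that these probabilities sum to $1$ (via the multinomial theorem applied to $(p_i^{+} + \sum_{l\neq i}p_l^{-})^{N-1} = 1$ and then $\sum_i p_i = 1$) are routine.
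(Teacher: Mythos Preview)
Your proof is correct and follows essentially the same approach as the paper: condition on $\epsilon_1$, use the product form~\eqref{eq: prob flow at level r} to see that the remaining $N-1$ variables are (conditionally) i.i.d.\ categorical with the shifted probabilities, and count sequences via the multinomial coefficient. The paper writes the count as a telescoping product of binomial coefficients rather than invoking the multinomial directly, but the argument is the same.
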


\begin{proof}
	For brevity, let $\epsilon_{(-1)} = (\epsilon_{2},...,\epsilon_{n})$ denote the sequence of $n$ categorical random variables with the first variable removed. Conditioning on $\epsilon_{1}$,
	\begin{align*}
	P(\mbf{X} = \mbf{x}) &= \sum_{i=1}^{K}P(\mbf{X} = \mbf{x} | \epsilon_{1} = i) \nonumber \\
	&= \sum_{i=1}^{K}p_{i}\sum_{\substack{\epsilon_{-(1)}\\ \mbf{X = x}}}\prod_{j=1}^{N}\left(p_{i}^{+}\right)^{[\epsilon_{j} = i]}\prod_{\substack{k=1\\k\neq i}}^{K}\prod_{l=1}^{N}\left(p_{l}^{-}\right)^{[\epsilon_{l} = k]} \\
	&= \sum_{i=1}^{K}p_{i}\left(\sum_{\substack{\epsilon_{2},...,\epsilon_{N}\\\mbf{X} = \mbf{x}}}\left(p_{i}^{+}\right)^{\sum_{j=1}^{N}[\epsilon_{j} = i]}\prod_{l \neq i}\left(p_{l}^{-}\right)^{\sum_{j=1}^{N}[\epsilon_{j} = l]}\right)
	\end{align*}
	Now, when $\epsilon_{1} = 1$, there are ${N-1 \choose x_{1}-1}$ combinations of the remaining $N-1$ categorical variables $\{\epsilon_{i}\}_{i=2}^{N}$ to reside in category 1, ${N-1-x_{1}-1 \choose x_{2}}$ ways the remaining $N-1-x_{1}-1$ categorical variables can reside in category 2, and so forth. Finally, there are ${N-1-x_{1}-1\sum_{i=2}^{K-1}x_{i} \choose x_{K}}$ ways the final unallocated $\{\epsilon_{i}\}$ can be in category $K$. Thus,
	
	\begin{align}
	P&\left(\mbf{X} = \mbf{x}|\epsilon_{1} = 1\right)\nonumber \\
	&\qquad= p_{1}{\scriptsize{N-1} \choose x_{1}-1}{N-1-x_{1}-1 \choose x_{2}}\cdots{N-1-x_{1}-1-\sum_{j=2}^{K-1}x_{j} \choose x_{K}}\left(p_{1}^{+}\right)^{x_{1}-1}\prod_{j=2}^{K}\left(p_{j}^{-}\right)^{x_{j}} \nonumber \\
	&\qquad = p_{1}\frac{(N-1)!}{(x_{1}-1)!\prod_{j=2}^{K}x_{j}!}\left(p_{1}^{+}\right)^{x_{1}-1}\prod_{j=2}^{K}\left(p_{j}^{-}\right)^{x_{j}}
	\end{align}
	
	Similarly, for $i=2,...,K$
	
	\begin{align}
	P&\left(\mbf{X} = \mbf{x}|\epsilon_{1} = i\right)\nonumber \\
	&\qquad= p_{i}{N-1 \choose x_{1}}\cdots{N-1-\sum_{j=1}^{i-1} \choose x_{i}}\cdots{N-1-x_{i}-1-\sum_{\substack{j\neq i, j=1}}^{K-1}x_{j} \choose x_{K}}\left(p_{1}^{+}\right)^{x_{1}-1}\prod_{j=2}^{K}\left(p_{j}^{-}\right)^{x_{j}} \nonumber \\
	&\qquad = p_{i}\frac{(N-1)!}{(x_{i}-1)!\prod_{j\neq i}x_{j}!}\left(p_{i}^{+}\right)^{x_{i}-1}\prod_{j\neq i}\left(p_{j}^{-}\right)^{x_{j}}
	\end{align}
	
	Summing completes the proof.
\end{proof}

\section{Properties}
\label{subsec:  properties}

This section details some useful properties of the Generalized Multinomial Distribution and the dependent categorical random variables. 

\subsection{Marginal Distributions}
\begin{theorem}[Univariate Marginal Distribution]
	The univariate marginal distribution of the Generalized Multinomial Distribution is the Generalized Binomial Distribution. That is, 
	\begin{equation}
	P(X_{i} = x_{i}) = q{N-1 \choose x_{i}}\left(p_{i}^{-}\right)^{x_{i}}\left(q^{-}\right)^{N-1-x_{i}} + p_{i}{N-1 \choose x_{i}-1}\left(p_{i}^{+}\right)^{x_{i}-1}\left(q^{+}\right)^{N-1-(x_{i}-1)}
	\label{eq: univariate marginal}
	\end{equation}
	where $q = \sum_{\substack{j \neq i}}p_{j}$, $q^{+} = q + \delta p_{i}$, and $q^{-} = q - \delta q$
	\label{thm: univariate marginal}
\end{theorem}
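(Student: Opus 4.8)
The plan is to read the univariate marginal as a genuine \emph{data reduction}: the single count $X_{i}$ depends on $(\epsilon_{1},\dots,\epsilon_{N})$ only through the indicator sequence $\eta_{j}:=[\epsilon_{j}=i]$, and $(\eta_{1},\dots,\eta_{N})$ is itself a first-kind dependent Bernoulli sequence (in the sense of \cite{Korzeniowski2013}) with base probability $p_{i}$ and the same dependency coefficient $\delta$. Granting this, $X_{i}=\sum_{j}\eta_{j}$ has exactly the Generalized Binomial Distribution with parameters $(p_{i},\delta,N)$, which is~\eqref{eq: univariate marginal} after translating notation. The proof I would actually write does this translation by hand from the Generalized Multinomial pmf already proved, so that no external formula is invoked.

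First I would verify the reduction. By Lemma~\ref{lemma: inductive prob distribution}, $P(\eta_{1}=1)=p_{i}$ and $P(\eta_{1}=0)=q:=\sum_{l\neq i}p_{l}=1-p_{i}$. The probability flow~\eqref{eq: prob flow at level r} shows that, conditionally on $\epsilon_{1}$, the variables $\epsilon_{2},\dots,\epsilon_{N}$ are independent; reading off that display gives $P(\epsilon_{j}=i\mid\epsilon_{1}=i)=p_{i}^{+}$ and $P(\epsilon_{j}=i\mid\epsilon_{1}=k)=p_{i}^{-}$ for every $k\neq i$ and $j\geq 2$. Since this last value is the \emph{same} for all $k\neq i$, the conditional law of $(\eta_{2},\dots,\eta_{N})$ given the single event $\{\eta_{1}=0\}$ is still that of i.i.d.\ $\mathrm{Bernoulli}(p_{i}^{-})$ variables (a mixture of identical laws). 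As $p_{i}^{+}=p_{i}+\delta q$ and $p_{i}^{-}=p_{i}-\delta p_{i}$ match Korzeniowski's $p^{+}$ and $q^{+}$ under $p\leftrightarrow p_{i}$, the sequence $(\eta_{j})$ is FK-dependent Bernoulli as claimed.

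To make this rigorous using only what is in the paper, I would instead sum the Generalized Multinomial pmf over all $\mathbf{x}$ with the $i$-th coordinate fixed at $x_{i}$, splitting the outer sum over the first-category index $k$ into $k=i$ and $k\neq i$. For $k=i$, the residual sum over $(x_{j})_{j\neq i}$ with $\sum_{j\neq i}x_{j}=N-x_{i}$ is a multinomial sum that collapses using $\sum_{j\neq i}p_{j}^{-}=(1-\delta)q=1-p_{i}^{+}$, yielding $p_{i}\binom{N-1}{x_{i}-1}(p_{i}^{+})^{x_{i}-1}(1-p_{i}^{+})^{N-x_{i}}$. For each $k\neq i$, the index $i$ sits among the ``$p^{-}$'' factors and contributes $(p_{i}^{-})^{x_{i}}$; after reindexing $x_{k}\mapsto x_{k}-1$ the remaining sum is again multinomial and collapses via $p_{k}^{+}+\sum_{j\neq k,i}p_{j}^{-}=q+\delta p_{i}=1-p_{i}^{-}$, and summing over $k\neq i$ against $\sum_{k\neq i}p_{k}=q$ gives $q\binom{N-1}{x_{i}}(p_{i}^{-})^{x_{i}}(1-p_{i}^{-})^{N-1-x_{i}}$. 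Adding the two contributions and rewriting the complementary factors via $q^{-}=q-\delta q$ and $q^{+}=q+\delta p_{i}$ gives~\eqref{eq: univariate marginal}.

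The main obstacle is bookkeeping rather than ideas. In the reduction, one must confirm that collapsing the $K-1$ categories different from $i$ into a single ``failure'' state does not destroy the first-kind structure, which hinges precisely on $p_{i}^{-}=p_{i}(1-\delta)$ being independent of which other category was drawn first. In the direct computation, one must keep straight which of the two complementary quantities $q^{-}=q-\delta q=1-p_{i}^{+}$ and $q^{+}=q+\delta p_{i}=1-p_{i}^{-}$ belongs to which branch, together with the off-by-one in the binomial coefficients caused by the forced occurrence of $\epsilon_{1}$ in its own category.
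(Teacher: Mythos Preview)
Your proposal is correct and takes essentially the same approach as the paper: split the generalized-multinomial pmf according to whether the conditioning category $k$ equals $i$ or not, collapse the residual multinomial sums using $\sum_{j\neq i}p_{j}^{-}=q-\delta q$ and $p_{k}^{+}+\sum_{j\neq k,i}p_{j}^{-}=q+\delta p_{i}$, then add the two pieces. Your Bernoulli-reduction framing via $\eta_{j}=[\epsilon_{j}=i]$ is exactly what the paper means by ``collapse the number of categories to 2,'' and your bookkeeping of which complementary factor ($q^{-}=1-p_{i}^{+}$ versus $q^{+}=1-p_{i}^{-}$) attaches to which branch is in fact cleaner than the paper's own exposition; note in passing that the displayed statement~\eqref{eq: univariate marginal} has $q^{+}$ and $q^{-}$ interchanged relative to what both you and the paper's proof actually derive.
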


\begin{proof}
	First, we claim the following:
 $q^{+} = q + \delta p_{i} = p_{l}^{+} + \sum_{\substack{j \neq l\\ j \neq i}}p_{j}^{-}$, $l=2,...,K$. This may be justified via a simple manipulation of definitions:
	\[
	p_{l}^{+} + \sum_{j \neq l}p_{j}^{-} = p_{l} + \delta\sum_{j\neq l}(p_{j}-\delta p_{j}) + \sum_{\substack{j \neq l\\ j \neq i}}(p_{j} - \delta p_{j}) 
	= p_{l} + \sum_{\substack{j\neq l\\j \neq i}}p_{j} + \delta p_{i} 
	= q + \delta p_{i}
\]
	Similarly, $q^{-} = q - \delta q$. Thus, we may collapse the number of categories to 2: Category $i$, and everything else. Now, notice that for $l\neq i$, $\left(p_{l}^{+}\right)^{x_{l}-1}\prod\limits_{\substack{j\neq l\\j\leq i}}\left(p_{j}^{-}\right)^{x_{j}} = \left(q^{+}\right)^{N-x_{i}-1}$ for $l = 1,...,K$ and $l \neq i$. Fix $k \neq i$. Then
	\begin{align}
	p_{k}\frac{(N-1)!}{(x_{k}-1)!\prod_{j\neq k}x_{j}!}\left(p_{k}\right)^{x_{k}-1}\prod_{j\neq k}\left(p_{j}^{-}\right)^{x_{j}} 
	&= p_{k}\left(p_{i}^{-}\right)^{x_{i}}\left(q^{+}\right)^{N-x_{i}-1}\frac{(N-1)!}{x_{i}!(x_{k}-1)!\prod_{\substack{j\neq k\\j\neq i}}x_{j}!} \nonumber \\
	&= p_{k}{N-1\choose x_{i}}\left(p_{i}^{-}\right)^{x_{i}}\left(q^{+}\right)^{N-x_{i}-1}
	\end{align}
	Then
	\begin{align*}
	\sum_{i=1}^{K}p_{i}\frac{(N-1)!}{(x_{i}-1)!\prod_{j \neq i}x_{j}!}\left(p_{i}^{+}\right)^{x_{i}-1}\prod_{j \neq i}\left(p_{j}^{-}\right)^{x_{j}} &= 
	p_{i}\frac{(N-1)!}{(x_{i}-1)!\prod_{j \neq i}x_{j}!}\left(p_{i}^{+}\right)^{x_{i}-1}\left(q^{-}\right)^{N-1-(x_{i}-1)} \\
	&\qquad+ \sum_{k \neq i}p_{k}\frac{(N-1)!}{(x_{k}-1)!\prod_{j \neq k}x_{j}!}\left(p_{k}^{+}\right)^{x_{k}-1}\prod_{j \neq k}\left(p_{j}^{-}\right)^{x_{j}} \\
	&= p_{i}{N-1 \choose x_{i}-1}\left(p_{i}^{+}\right)^{x_{i}-1}\left(q^{-}\right)^{N-1-(x_{i}-1)} \\
	&\qquad+ \sum_{k \neq i}p_{k}{N-1\choose x_{i}}\left(p_{i}^{-}\right)^{x_{i}}\left(q^{+}\right)^{N-x_{i}-1} \\
	&= p_{i}{N-1 \choose x_{i}-1}\left(p_{i}^{+}\right)^{x_{i}-1}\left(q^{-}\right)^{N-1-(x_{i}-1)} \\
	&\qquad+ q{N-1\choose x_{i}}\left(p_{i}^{-}\right)^{x_{i}}\left(q^{+}\right)^{N-x_{i}-1}
	\end{align*}
	
\end{proof}

The above theorem shows another way the generalized multinomial distribution is an extension of the generalized binomial distribution. 

\subsection{Moment Generating Function}

\begin{theorem}[Moment Generating Function]
	The moment generating function of the generalized multinomial distribution with $K$ categories is given by
	\begin{equation}
	M_{\mbf{X}}(\mbf{t}) = \sum_{i=1}^{K}p_{i}e^{t_{i}}\left(p_{i}^{+}e^{t_{i}} + \sum_{j \neq i}p_{j}^{-}e^{t_{j}}\right)^{n-1}
	\label{eq: mgf multinomial}
	\end{equation}
	where $\mbf{X} = (X_{1},...,X_{K}), \mbf{t} = (t_{1},...,t_{K})$.
\end{theorem}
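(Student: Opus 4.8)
The plan is to compute $M_{\mbf X}(\mbf t) = E\!\left[e^{\mbf t \cdot \mbf X}\right] = E\!\left[\exp\!\left(\sum_{i=1}^{K} t_i X_i\right)\right]$ by conditioning on $\epsilon_1$, in the same spirit as the proof of the Generalized Multinomial Distribution above. Since $X_k = \sum_{j=1}^{N}[\epsilon_j = k]$ merely counts the occurrences of category $k$, one has $\sum_{k=1}^{K} t_k X_k = \sum_{j=1}^{N} t_{\epsilon_j} = t_{\epsilon_1} + \sum_{j=2}^{N} t_{\epsilon_j}$. The structural fact I would use is that, conditionally on $\{\epsilon_1 = i\}$, the variables $\epsilon_2,\dots,\epsilon_N$ are independent and identically distributed with $P(\epsilon_j = i \mid \epsilon_1 = i) = p_i^+$ and $P(\epsilon_j = l \mid \epsilon_1 = i) = p_l^-$ for $l \neq i$; this is exactly the factorized form of the probability flow~\eqref{eq: prob flow at level r}.

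First I would condition on $\epsilon_1 = i$, which by Lemma~\ref{lemma: inductive prob distribution} has probability $p_i$, to get
\[
M_{\mbf X}(\mbf t) = \sum_{i=1}^{K} p_i\, e^{t_i}\, E\!\left[\left.\prod_{j=2}^{N} e^{t_{\epsilon_j}}\,\right|\,\epsilon_1 = i\right].
\]
The conditional i.i.d. property factors the expectation into $\bigl(E[e^{t_{\epsilon_2}}\mid \epsilon_1 = i]\bigr)^{N-1}$, and a one-line evaluation of the conditional law of $\epsilon_2$ gives $E[e^{t_{\epsilon_2}}\mid \epsilon_1 = i] = p_i^+ e^{t_i} + \sum_{j \neq i} p_j^- e^{t_j}$. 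Substituting produces~\eqref{eq: mgf multinomial} (with $n = N$).

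An alternative, more computational route avoids conditioning: starting from the closed form for $P(\mbf X = \mbf x)$, write $M_{\mbf X}(\mbf t) = \sum_{\mbf x} e^{\mbf t\cdot \mbf x} P(\mbf X = \mbf x)$, interchange the finite sum over $\mbf x$ with the sum over $i$, and in the $i$th term perform the index shift $x_i \mapsto x_i - 1$ so that the remaining multi-index runs over all compositions of $N-1$ into $K$ nonnegative parts. Pulling the factor $e^{t_i}$ to the front and combining $(p_i^+)^{x_i-1}$ with the shifted exponential, the inner sum is exactly a multinomial expansion and collapses to $\left(p_i^+ e^{t_i} + \sum_{j \neq i} p_j^- e^{t_j}\right)^{N-1}$, giving the same answer.

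There is no genuinely difficult step here. The only points that need care are (i) justifying the conditional independence of $\epsilon_2,\dots,\epsilon_N$ given $\epsilon_1$, which is immediate from the product structure of~\eqref{eq: prob flow at level r}, and (ii) in the computational route, the bookkeeping of the index shift and of which factors get absorbed into the multinomial sum. I would present the conditioning argument as the main proof, since it parallels the existing proof of the distribution and makes transparent why the answer has the shape "$p_i e^{t_i}$ times an $(N-1)$st power."
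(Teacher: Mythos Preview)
Your proposal is correct. The paper, however, takes exactly your ``alternative, more computational route'': it writes $M_{\mbf X}(\mbf t) = \sum_{\mbf x} e^{\mbf t^T\mbf x}P(\mbf X=\mbf x)$, splits the outer sum according to which coordinate $x_i$ is at least $1$, performs the shift $y = x_i-1$ in the $i$th block, and then identifies the remaining inner sum as the moment generating function of a standard multinomial with parameter vector $(p_1^-,\dots,p_{i-1}^-,p_i^+,p_{i+1}^-,\dots,p_K^-)$, which collapses to $\bigl(p_i^+ e^{t_i} + \sum_{j\neq i} p_j^- e^{t_j}\bigr)^{N-1}$. Your preferred conditioning argument is a genuinely cleaner alternative: by writing $\sum_k t_k X_k = \sum_{j=1}^N t_{\epsilon_j}$ and using the conditional i.i.d.\ structure of $\epsilon_2,\dots,\epsilon_N$ given $\epsilon_1$ (immediate from the product form of~\eqref{eq: prob flow at level r}), you bypass all the multi-index bookkeeping and make the ``$p_i e^{t_i}$ times an $(N-1)$st power'' shape transparent. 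The computational route has the minor advantage of relying only on the already-proved closed form for $P(\mbf X=\mbf x)$ rather than on the conditional independence directly, but your version is shorter and more conceptual.
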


\begin{proof}
	By definition, $M_{\mbf{X}}(\mbf{t}) = \mathds{E}\left[e^{\mathbf{t}^{T}X}\right] = \sum_{\mathbf{X}}e^{\mbf{t}^{T}\mbf{X}}P(\mbf{X} = \mbf{x}) = \sum_{\mbf{X}}e^{\mbf{t}^{T}\mbf{X}}\sum_{i=1}^{K}p_{i}\frac{(N-1)!}{(x_{i}-1)!\prod_{j\neq i}x_{j}!}\left(p_{i}^{+}\right)^{x_{i}-1}\prod_{j \neq i}\left(p_{j}^{-}\right)^{x_{j}}$.
	Let $S_{m} = \sum_{i=1}^{m}x_{i}$, and  Expanding the above, 
	\begin{align*}
	\mathds{E}&\left[e^{\mathbf{t}^{T}X}\right] = \sum_{x_{1} = 1}^{N}\sum_{x_{2}=0}^{N-x_{1}}\cdots\sum_{x_{k-1} = 0}^{N - S_{K-2}}e^{\mbf{t}^{T}\mbf{X}}p_{1}\frac{(N-1)!}{(x_{1}-1)!\prod_{j=2}^{k-1}x_{j}!(N-\sum_{j=1}^{K-1}x_{j})!}\left(p_{1}^{+}\right)^{x_{1}-1}\left[\prod_{j=2}^{K-1}\left(p_{j}^{-}\right)^{x_{j}}\right]\left(p_{K}^{-}\right)^{N - S_{K-1}} \\
		&\quad + \sum_{i=2}^{K}\left[\sum_{x_{i}=1}^{N}\sum_{x_{1}=0}^{N-S_{1}}\cdots\sum_{x_{i-1} = 0}^{N - x_{i} - S_{i-2}}\sum_{x_{i+1}=0}^{N-S_{i}}\cdots\sum_{x_{K-1}=0}^{N-S_{K-2}}
		e^{\mbf{t}^{T}\mbf{X}}p_{i}\frac{(N-1)!}{(x_{i}-1)!\prod_{j=1,j\neq i}^{K-1}x_{j}!(N-S_{K-1})!}\right. \\
		&\qquad\qquad\times\left.\left(p_{i}^{+}\right)^{x_{i}-1}\left[\prod_{\substack{j=1\\j\neq i}}^{K-1}\left(p_{j}^{-}\right)^{x_{j}}\right]\left(p_{K}^{-}\right)^{N - S_{K-1}}\right]
	\end{align*}
Taking the first term, denoted $T_{1}$, let $y = x_{1} - 1$. Then 
\begin{align*}
T_{1} = p_{1}e^{t_{1}}\sum_{y=0}^{N-1}\sum_{x_{2}=0}^{N -1 -y}\cdots\sum_{x_{K-1} = 0}^{N -1 - y -\sum_{j=2}^{K-2}x_{j} }e^{t_{1}y + t_{2}x_{2} + ... + t_{K}x_{K}}\left(p_{1}^{+}\right)^{y}\left[\prod_{j=2}^{K-1}\left(p_{j}^{-}\right)^{x_{j}}\right]\left(p_{K}^{-}\right)^{N - 1 - y - \sum_{j =2}^{K-1}x_{j}} 
\end{align*}
The summation of the above is simply the moment generating function of a standard multinomial distribution with probabilities $\mbf{p} = (p_{1}^{+}, p_{2}^{-},...,p_{K}^{-})$. Thus,
\[T_{1} = p_{1}e^{t_{1}}\left(p_{1}^{+}e^{t_{1}} + \sum_{j=2}^{K}p_{j}^{-}e^{t_{j}}\right)^{N-1}\]
A similar procedure follows with the remaining terms, and summing finishes the proof.

\end{proof}

\subsection{Moments of the Generalized Multinomial Distribution}
Using the moment generating function in the standard way, the mean vector $\mu$ and the covariance matrix $\Sigma$ may be derived. 

\paragraph{Expected Value} The expected value of $\mbf{X}$ is given by $\mu = n\mbf{p}$ where $\mbf{p} = (p_{1},...,p_{K})$

\paragraph{Covariance Matrix} The entries of the covariance matrix are given by 
\[\Sigma_{ij} = \begin{cases}
		p_{i}(1-p_{i})(n + \delta(n-1) + \delta^{2}(n-1)(n-2)), & i = j \\
		p_{i}p_{j}(\delta(1-\delta)(n-2)(n-1)-n), & i \neq j
\end{cases}\] 
Note that if $\delta = 0$, the generalized multinomial distribution reduces to the standard multinomial distribution and $\Sigma$ becomes the familiar multinomial covariance matrix. The entries of the corresponding correlation matrix are given by 
\[\rho(X_{i},X_{j}) = -\sqrt{\frac{p_{i}p_{j}}{(1-p_{i})(1-p_{j})}}\left(\frac{n-\delta(n-1)(n-2)}{n+\delta(n-1)+\delta^{2}(n-1)(n-2)}\right)\]

If $\delta = 1$, the variance of $X_{i}$ tends to $\infty$ with $n$. This is intuitive, as $\delta = 1$ implies perfect dependence of $\epsilon_{2},...,\epsilon_{n}$ on the outcome of $\epsilon_{1}$. Thus, $X_{i}$ will either be 0 or $n$, and this spread increases to $\infty$ with $n$. 

\section{Generating a Sequence of Correlated Categorical Random Variables}
\label{sec: generating sequence of DCRVs}
For brevity, we will take the acronym DCRV for a \textbf{D}ependent \textbf{C}ategorical \textbf{R}andom \textbf{V}ariable. A DCRV sequence $\epsilon = (\epsilon_{1},...,\epsilon_{n})$ is in itself a random variable, and thus has a probability distribution. In order to provide an algorithm to generate such a sequence, we first derive this probability distribution. 

\subsection{Probability Distribution of a DCRV Sequence}
\label{subsec: pdf DCRV}
The probability distribution of the DCRV sequence $\epsilon$ of length $n$ is given formally in the following theorem. The proof follows in a straightforward fashion from the construction in Section~\ref{sec: const of dependent categorical vars} and is therefore omitted. 

\begin{theorem}[Distribution of a DCRV Sequence]
		Let $(\Sigma,\mathcal{F}, \mathds{P}) = ([0,1], \mathcal{B}, \mu)$. Let $\epsilon_{i} : [0,1] \to \{1,...,K\}$, $i = 1,...,n$, $n \in \N$ be DCRVs as defined in~\eqref{eq: epsilon def 1}. Let $\epsilon = (\epsilon_{1},...,\epsilon_{n})$ denote the DCRV sequence with observed values $e = (e_{1},...,e_{n})$. Then $\mu$ has the density
		\begin{equation}
		f(x) = \sum_{i=1}^{K^{n}}K^{n}m^{i}\mathds{1}_{\tiny((i-1)/K^{n}, i/K^{n}]}(x)
		\label{eq: mu density}
		\end{equation}
		and 
		\begin{equation}
		P(\epsilon = e) = \int\limits_{\left(\frac{i-1}{K^{n}}, \frac{i}{K^{n}}\right]}f(x)dx = m^{i}
		\label{eq: pdf of DCRV sequence}
		\end{equation}
where $m^{i}$ is the mass allocated to the interval $\left(\frac{i-1}{K^{n}}, \frac{i}{K^{n}}\right]$ by~\eqref{eq: prob flow at level r} and $i$ as the lexicographic order of $e$ in the sample space $\{1,...,K\}^{n}$  given by the relation $\frac{i}{K^{n}} = \sum_{j=1}^{K^{n}}\frac{\epsilon_{j}-1}{K^{j}}$.
\label{thm: pdf of sequence}
\end{theorem}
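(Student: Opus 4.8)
The plan is to read both displays straight off the level-$n$ mass assignment built in Section~\ref{sec: const of dependent categorical vars}: for a DCRV sequence of fixed length $n$, the measure $\mu$ on $[0,1]$ is precisely the one that places mass $m^i$ — the leaf mass given by the probability flow~\eqref{eq: prob flow at level r} — on the $i$-th $K$-adic interval $I_i:=\left(\tfrac{i-1}{K^n},\tfrac{i}{K^n}\right]$ and spreads it uniformly there, so everything reduces to bookkeeping.

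First I would record the elementary facts. After $n$ levels the $K$-nary tree has $K^n$ leaves assigned injectively to $I_1,\dots,I_{K^n}$, leaf $i$ carrying the mass $m^i$ from~\eqref{eq: prob flow at level r}; summing each of the $K$ flows over its $K^{n-1}$ branches returns $p_i$ (the identity displayed just after~\eqref{eq: prob flow at level r}, itself a consequence of Lemma~\ref{lemma: criterion for pi+ pi-}), so $\sum_{i=1}^{K^n}m^i=\sum_i p_i=1$, and each $m^i\ge 0$. Hence the simple function $f$ in~\eqref{eq: mu density} is nonnegative with $\int_0^1 f\,dx=\sum_i K^n m^i|I_i|=\sum_i m^i=1$, so it is a genuine density, and by construction $\mu(A)=\int_A f\,dx$; in particular $\mu(I_i)=\int_{I_i}f\,dx=K^n m^i|I_i|=m^i$, which is the middle and right equalities of~\eqref{eq: pdf of DCRV sequence}.

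Next I would show that $\{\epsilon=e\}$ is a single leaf interval and locate it. By~\eqref{eq: epsilon def 1}--\eqref{eq: epsilon def 2} each $\epsilon_j$ is constant on every level-$j$ $K$-adic interval, and the level-$j$ intervals refine the level-$(j-1)$ ones; therefore $\{x:\epsilon_1(x)=e_1,\dots,\epsilon_n(x)=e_n\}$ is a nested intersection of $K$-adic intervals and equals a single $I_i$. A short induction on $n$ — the level-$(n-1)$ interval cut out by $e_1,\dots,e_{n-1}$ splits into $K$ children, the one on which $\epsilon_n=e_n$ being $I_i$ — shows the left endpoint of $I_i$ is $\sum_{j=1}^n\tfrac{e_j-1}{K^j}$, i.e. $i$ is the lexicographic rank of $e$ in $\{1,\dots,K\}^n$ (the base-$K$ expansion of $i-1$), which is exactly the correspondence in the statement. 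Combining, $P(\epsilon=e)=\mu(\{\epsilon_1=e_1,\dots,\epsilon_n=e_n\})=\mu(I_i)=m^i$, and tracing the same root-to-leaf branch identifies $m^i$ with the product~\eqref{eq: prob flow at level r}.

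The only step that is not purely cosmetic is the middle of the last paragraph: verifying that the conjunction $\{\epsilon_1=e_1,\dots,\epsilon_n=e_n\}$ collapses to exactly one $K$-adic interval and that its index equals the lexicographic rank of $e$. That forces one to unwind the two equivalent forms~\eqref{eq: epsilon def 1}--\eqref{eq: epsilon def 2} together with the nested tree structure; once that induction is in hand, the density normalization and the one-line integral over $I_i$ complete the proof with no further work.
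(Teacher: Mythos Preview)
Your proposal is correct and is exactly the straightforward unwinding-of-definitions argument the paper has in mind: the paper omits the proof entirely, stating only that it ``follows in a straightforward fashion from the construction in Section~\ref{sec: const of dependent categorical vars},'' and your write-up is precisely that straightforward argument. The only point worth noting is that the indexing relation in the theorem statement has a typo (the upper summation limit should be $n$, not $K^n$, and the left side should be $\tfrac{i-1}{K^n}$ for the left endpoint), which you have silently corrected in your proof.
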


\subsection{Algorithm}
\label{subsec: algorithm}

We take a common notion of using a uniform random variable in order to generate the desired random variable $\epsilon$. For $\epsilon$ with distribution $F(x) = \int_{0}^{x}f(y)dy$, $f(x)$ as in~\eqref{eq: mu density}, it is clear that $F$ is invertible with inverse $F^{-1}$. Thus, $F^{-1}(U)$ for $U \sim \text{Uniform}[0,1]$ has the same distribution as $\epsilon$. Therefore, sampling $u$ from $U$ is equivalent to the sample $e = F^{-1}(u)$ from $\epsilon$.

In Section~\ref{sec: const of dependent categorical vars}, we associated $\epsilon_{n}$ to the intervals given in~\eqref{eq: epsilon def 1}

\begin{equation}
\begin{aligned}
\epsilon_{N} =i  &\text{ on } \left(\tfrac{l-1}{K^{N}}, \tfrac{l}{K^{N}}\right], & i \equiv l \mod K , & \quad i = 1,...,K-1  \\
\epsilon_{N} = K  &\text{ on } \left(\tfrac{l-1}{K^{N}}, \tfrac{l}{K^{N}}\right], & 0 \equiv l \mod K
\end{aligned}
\label{eq: epsilon def 2'}
\end{equation}

From the construction in Section~\ref{sec: const of dependent categorical vars}, each sequence has a 1-1 correspondence with the interval $\left[\frac{i-1}{K^{n}}, \frac{i}{K^{n}}\right)$ for a unique $i = 1,...,K^{n}$. The probability of such a sequence can be found using Theorem~\ref{thm: pdf of sequence}:
\[P(\epsilon = e) = F\left(\left[\frac{i-1}{K^{n}}, \frac{i}{K^{n}}\right)\right) = m^{i} = l\left([s_{i-1}, s_{i})\right)\]
where $l$ is the length of the above interval, and $s_{i} = \sum_{j=1}^{i}m^{j}$. Therefore, we have now partitioned the interval $[0,1)$ according to the distribution of $\epsilon$ bijectively to the $K-$nary partition of $[0,1)$ corresponding to the particular sequence. Thus, sampling $u \in [0,1)$ from a uniform distribution and finding the interval $[s_{i-1},s_{i})$ and corresponding $i$ will yield the unique DCRV sequence.  

\paragraph{Algorithm Strategy:} Given $u \in [0,1)$ and $n \in \N$, find the unique interval $[s_{i-1}, s_{i})$, $i= 1,...,K^{n}$ that contains $u$ by ``moving down the tree" and narrowing down the ``search interval" until level $n$ is reached. 

We provide an explicit example prior to the pseudocode to illustrate the strategy.

\subsubsection{Example DCRV Sequence Generation}

	\begin{figure}[H]
		\centering
		\includegraphics[scale=.7]{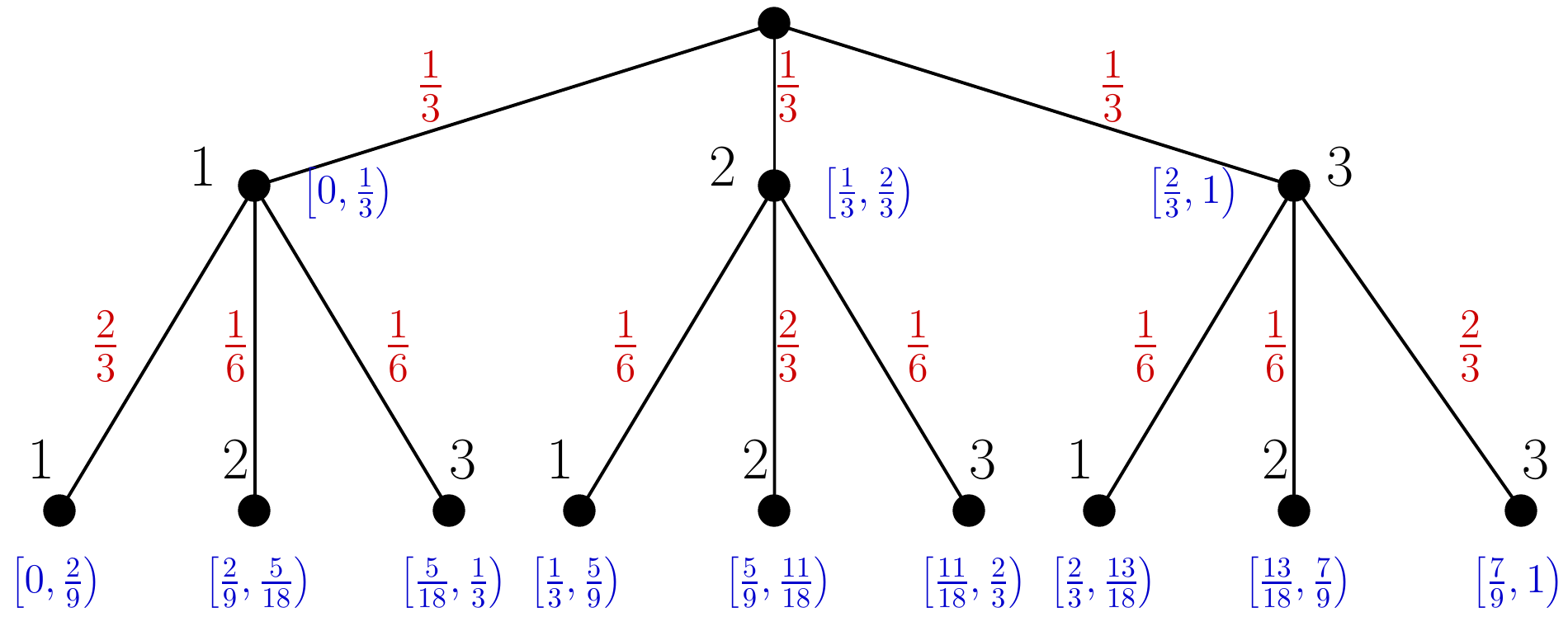}
		\caption{Probabilistic partitions of [0,1) for a DCRV sequence of length 2 with $K=3$.}
		\label{fig: algorithm illustration}
	\end{figure}

Suppose $K = 3$, $p_{1} = p_{2} = p_{3} = 1/3$, and $\delta = 1/2$, and suppose we wish to generate a DCRV sequence of length 2. Figure~\ref{fig: algorithm illustration} gives the probability flow and the corresponding probability intervals $[s_{i-1}, s_{i})$ that partition $[0,1)$ according to Theorem~\ref{thm: pdf of sequence}. Now, suppose $u = \frac{3}{4}$. We now illustrate the process of moving down the above tree to generate the sequence.

\begin{enumerate}
	\item The first level of the probability tree partitions the interval $[0,1)$ into three intervals given in Figure~\ref{fig: algorithm illustration}. $u = \frac{3}{4}$ lies in the third interval, which corresponds to $\epsilon_{1} = 3$. Thus, the first entry of $e$ is given by $e_{1} = 3$. 
	\item Next, the search interval is reduced to the interval from the first step $[2/3,1)$. We then generate the partitions of $[2/3,1)$ by cumulatively adding $p_{3}p_{i}^{-}, i = 1,2,3$ to the left endpoint $2/3$. Thus, the next partition points are 
			\begin{itemize}
				\item $2/3 + (1/3)(1/6) = 13/18$,
				\item $2/3 + (1/3)(1/6) + (1/3)(1/6) = 7/9$, and 
				\item  $2/3 + (1/3)(1/6) + (1/3)(1/6) + (1/3)(2/3) = 1$.
			\end{itemize}  
	Yielding the subintervals of $[2/3,1)$:
			\begin{itemize}
				\item $[2/3, \quad 13/18)$,
				\item $[13/18, \quad 7/9)$, and 
				\item  $[7/9,\quad 1) $.
			\end{itemize}
   We now find the interval from above that contains $u$ is the second: $[13,18, \quad 7/9)$. Thus, $\epsilon_{2} = 2$. 
\end{enumerate} 
Since we only sought a sequence of length 2, the algorithm is finished, and we have generated the sequence $e = (3,2)$. If a longer sequence is desired, we repeat step 2 until we are at level $n$. 

In general, the algorithm is given below. The $\textbf{IntervalSearch}(x,p =(p_{1},p_{2},...,p_{m}))$ procedure finds the interval $i$ built from the partitions given in the vector $p$ containing $x$ via binary search. Let $\textbf{USample}(n)$ be the procedure that samples $n$ instances of a uniformly distributed random variable. Also, let $p_{i}' = (p_{1}^{-},...,p_{i-1}^{-}, p_{i}^{+}, p_{i+1}^{-},...,p_{K})$ be the ``altered" probability vector for the categorical variables $2,...,n$ given $\epsilon_{1} = i$. 
\begin{algorithm}
	\caption{DCRV Sequence Generation}\label{alg: DCRV alg}
	\begin{algorithmic}[1]
		\Procedure{DCRVSequence}{n, $\delta$, $p = (p_{1},...,p_{K})$}
		\State $\textit{u} \gets \textbf{USample}(1)$
		\State $\textit{sequence} \gets vector(n)$
		\State $\textit{partitions} \gets \textbf{cumsum}(p_{1},...,p_{k})$
		\State $s \gets \textbf{IntervalSearch}(u,\textit{partitions})$
		\State $\textit{sequence}[1] \gets s + 1$
		\State $p_{new} \gets p'_{s+1}$
		\State $p_{prev} \gets p_{s +1}$
		\For{$i = 2, i \leq n, i+=1$}
				\State $endPoint_{l} \gets partitions[s]$
				\State $endPoint_{r} \gets partitions[s+1]$
				\State $partitions \gets endPoint_{l} + \textbf{cumsum}(p_{prev}\cdot p_{new})$
				\State $s \gets \textbf{IntervalSearch}(u,partitions)$
				\State $sequence[i] \gets s+1$
				\State $l \gets length(sequence)$
				\State $p_{prev} \gets p_{prev}\cdot p'_{sequence[l]}$
		\EndFor
		\Return sequence
		\EndProcedure
	\end{algorithmic}
\end{algorithm}

\section{Conclusion}

Categorical variables play a large role in many statistical and practical applications across disciplines. Moreover, correlations among categorical variables are common and found in many scenarios, which can cause problems with conventional assumptions. Different approaches have been taken to mitigate these effects, because a mathematical framework to define a measure of dependency in a sequence of categorical variables was not available. This paper formalized the notion of dependent categorical variables under a first-dependence scheme and proved that such a sequence is identically distributed but now dependent. With an identically distributed but dependent sequence, a generalized multinomial distribution was derived in Section~\ref{sec: gen multinomial} and important properties of this distribution were provided. An efficient algorithm to generate a sequence of dependent categorical random variables was given.

%------------------------------------------------
\phantomsection
\section*{Acknowledgments} % The \section*{} command stops section numbering

\addcontentsline{toc}{section}{Acknowledgments} % Adds this section to the table of contents

The author extends thanks to Jason Hathcock for his suggestions and review. 

%----------------------------------------------------------------------------------------
%	REFERENCE LIST
%----------------------------------------------------------------------------------------
\phantomsection
 
 \bibliographystyle{acm}
 \bibliography{GenMultinomial}

%----------------------------------------------------------------------------------------

\end{document}